\newcounter{braid}
\newcounter{strands}
\def\holim{\protect\displaystyle\operatornamewithlimits{holim}_\leftarrow}
\def\cross{%
  \@ifnextchar^{\message{Got sup}\cross@sup}{\cross@sub}}
\def\cross@sup^#1_#2{\render@cross{#2}{#1}}
\def\cross@sub_#1{\@ifnextchar^{\cross@@sub{#1}}{\render@cross{#1}{1}}}
\def\cross@@sub#1^#2{\render@cross{#1}{#2}}
\def\render@cross#1#2{
  \def\strand{#1}
  \def\crossing{#2}
  \pgfmathsetmacro{\cross@y}{-\value{braid}*\braid@h}
  \pgfmathtruncatemacro{\nextstrand}{#1+1}
  \foreach \thread in {1,...,\value{strands}}
  {
    \pgfmathsetmacro{\strand@x}{\thread * \braid@w}
    \ifnum\thread=\strand
    \pgfmathsetmacro{\over@x}{\strand * \braid@w + .5*(1 - \crossing) * \braid@w}
    \pgfmathsetmacro{\under@x}{\strand * \braid@w + .5*(1 + \crossing) * \braid@w}
    \draw[braid] \pgfkeysvalueof{/tikz/braid start} +(\under@x pt,\cross@y pt) to[out=-90,in=90] +(\over@x pt,\cross@y pt -\braid@h);
    \draw[braid] \pgfkeysvalueof{/tikz/braid start} +(\over@x pt,\cross@y pt) to[out=-90,in=90] +(\under@x pt,\cross@y pt -\braid@h);
    \else
    \ifnum\thread=\nextstrand
    \else
     \draw[braid] \pgfkeysvalueof{/tikz/braid start} ++(\strand@x pt,\cross@y pt) -- ++(0,-\braid@h);
    \fi
   \fi
  }
  \stepcounter{braid}
}
\tikzset{braid/.style={double=\pgfkeysvalueof{/tikz/braid colour},double distance=1pt,line width=2pt,white}}
\newcommand{\braid}[2][]{%
  \begingroup
  \pgfkeys{/tikz/strands=2}
  \tikzset{#1}
  \pgfkeysgetvalue{/tikz/braid width}{\braid@w}
  \pgfkeysgetvalue{/tikz/braid height}{\braid@h}
  \setcounter{braid}{0}
  \let\sigma=\cross
  #2
  \endgroup
}
\newtheorem{theorem}{Theorem}
\newtheorem{proposition}[theorem]{Proposition}
\newtheorem{definition}[theorem]{Definition}
\def\Proof{\medskip\noindent{\bf Proof: }}
\def\Z{\mathbb{Z}}
\def\Pi{\mathbb{P}^{\infty}}
\def\md{\mathcal{D}}
\def\mt{\mathcal{T}}
\def\qed{\hfill$\square$\medskip}
\def\Zpk{\mathbb{Z}/p^{k}}
\def\Zpk1{\mathbb{Z}/p^{k-1}}
\newcommand{\rref}[1]{(\ref{#1})}
\newcommand{\beg}[2]{\begin{equation}\label{#1}#2\end{equation}}
\def\r{\rightarrow}
\def\me{\mathcal{E}}
\def\mf{\mathcal{F}}
\def\sl2{\widetilde{SL_{2}(\Z)}}
\def\md{\mathcal{D}}
\title[Equivariant cobordism ring]{The equivariant complex cobordism ring of a finite abelian group}
\author{William C. Abram, Igor Kriz}
\address{Department of Mathematics \\ Hillsdale College \\ Hillsdale, MI 49242}
  \email{wabram@hillsdale.edu}
\address{Department of Mathematics \\ University of Michigan \\ Ann Arbor, MI 48109}
    \email{ikriz@umich.edu}
\thanks{The first author was supported by an NSF graduate fellowship. 
The second author was supported by NSF grant DMS 1102614}
\begin{document}

\maketitle

\begin{abstract}
We compute the equivariant (stable) complex cobordism ring $(MU_G)_*$ for finite abelian groups $G$.
\end{abstract}

\section{Introduction}

\vspace{3mm}

The calculation of the non-equivariant cobordism ring due to Milnor and Quillen \cite{milnor,q} was one of the
great successes of algebraic topology. 
The $G$-equivariant complex cobordism ring  for $G$ a compact Lie group can be
defined analogously to the non-equivariant case. It was noticed almost immediately however
(e.g. \cite{w}) that because of failure of equivariant
transversality, equivariant cobordism groups are not the homotopy groups of an $RO(G)$-graded 
generalized (co)homology theory and hence are much more difficult
to calculate. 
Because of this, tom Dieck \cite{td} 
introduced the {\em stable} equivariant complex cobordism ring, which is the universal object
remedying this situation. It has both a geometric characterization (Br\"{o}cker and Hook \cite{bh}) 
and a characterization as
the coefficient ring of the $G$-equivariant Thom spectrum.

\vspace{3mm}
Perhaps surprisingly, the problem of calculating
explicitly tom Dieck's stable equivariant cobordism ring $(MU_G)_*$
has remained open for the last 40 years, despite some great progress (\cite{cm, gm, Greenlees}, just to name a few
milestones).
To date, there were only two complete calculations known: 
The case of a $p$-primary cyclic group was done by the second author \cite{k}. This computation
comes in the form of a pullback diagram, but a recipe is given in \cite{k} for
recovering explicitly individual elements of the cobordism ring from the diagram. This 
method was used by Strickland \cite{strick}
to give, by purely algebraic methods, a presentation of the $\Z/2$-equivariant stable cobordism ring in terms of
commutative ring generators and defining relations. 

The other known computation is due to Dev Sinha \cite{sinha}. His result is a beautiful
presentation of the $MU_*$-algebra $(MU_{S^1})_*$ in terms of generators and
defining relations. This computation, in fact, has the additional benefit that it gives explicit algebra
generators of $(MU_{(S^1)^n})_*$, and via a surprising short exact sequence,
also generators of $(MU_G)_*$ for any finite abelian group $G$. Sinha's approach uses Comezana's
theorem \cite{comezana} that $(MU_G)_*$ for a compact Lie group is a {\em free $MU_*$-module}. 
This is used to pick splittings of restriction maps. Comezana's proof is 
highly non-constructive,
and Sinha's generators are therefore, necessarily, non-explicit. From the point of
view of \cite{k,strick}, it is, for example, not even at all obvious how to write down explicit free generators of
$(MU_{\Z/2})_*$ as an $MU_*$-module. What is remarkable about the main theorem of
\cite{sinha} about $(MU_{S^1})_*$ is that changing generators within the choices 
allowed leads to an isomorphism of rings {\em with relations of the same form}. It is worth noting that there
is a connection between the results of Sinha and Strickland, as Strickland's generators satisfy the 
defining properties of Sinha's generators.

\vspace{3mm}
The 
main result of the present note is an explicit calculation of $(MU_G)_*$ for a finite
abelian group $G$. While
the meaning of the words ``explicit calculation'' 
is debatable in the case of a complicated ring such as $(MU_G)_*$, the answer we give here 
is purely algebraic, described in terms of concrete ring-theoretic constructions. 
In fact, the form in which the result appears is a direct generalization of \cite{k}, 
with the pullback replaced by a more complicated limit diagram. Similar
comments as in \cite{k} regarding extracting specific elements 
apply to the present case, and the method of Strickland \cite{strick}
can therefore in principle also be applied to our present situation. For future directions, one can imagine
geometric applications, for example to studying group actions on manifolds with isolated fixed sets or the study of 
equivariant genera, as Sinha did, or applications to equivariant formal group laws, or generalizations to other
complex-oriented equivariant cohomology theories.

\vspace{3mm}
The effective methods of this paper build upon a rich heritage of localization and completion techniques, 
which facilitate computations that would otherwise prove intractable. Early examples would be the Atiyah-Segal 
completion theorem in equivariant $K$-theory \cite{AS69} and tom Dieck's localization theorems for equivariant 
cohomology \cite[Chapter 7]{tD79}. Another early treatment of equivariant localization can be found in \cite{MMT82}. 
Localization and completion theorems for $MU$-module spectra can be found in \cite{gm}. Many of the techniques that 
we employ, including the use of families, will be familiar to the experts. A basic reference with good exposition would be 
the ``Alaska" volume \cite{May96}. For an introduction to equivariant cobordism, including the use of families, 
see \cite[Chapter XV]{May96}. For localization and completion results and calculations in complex cobordism, see 
\cite[Chapters XXV-XXVI]{May96}. The computations of this paper are accomplished by use of the Tate square, as in \cite{k}, 
and in particular we apply the methods of that paper inductively. 

\vspace{5mm}
\section{Statement of the main result}
\label{sstaste}
We must first recall certain basic concepts of equivariant homotopy theory
(\cite{lms}). Recall that a {\em family} $\mathcal{F}$ of subgroups of a finite group is
a system closed under subgroups and conjugation (the latter being vacuous in the abelian
case). The {\em classifying space} of a family $\mathcal{F}$ is a $G$-CW complex $E\mathcal{F}$
which satisfies 
$$E\mathcal{F}^H\simeq\left\{\begin{array}{ll}
* & \text{if $H\in\mathcal{F}$}\\
\emptyset & \text{else}.
\end{array}\right.
$$
Recall also the homotopy cofiber sequence
$$E\mathcal{F}_+\r S^0\r \widetilde{E\mathcal{F}},$$
where the subscript $+$ means the inclusion of a disjoint base point. 
We will mostly be interested in two kinds of families associated with a subgroup $H\subseteq G$,
namely the family $\mf(H)$ of subgroups contained in $H$ and the family $\mf[H]$ of subgroups
not containing $H$. Instead of $E\mf(H)$, one usually writes $E(G/H)$.

Let $G$ be a finite abelian group. 
Denote by $P(G)$ the poset of all non-empty sets $S$ of subgroups of $G$ which are totally
ordered by inclusion:
\beg{e1}{S=\{H_1\subsetneq H_2\subsetneq...\subsetneq H_k\},
}
with ordering given by inclusion: $S\leq T$ if and only if $S\subseteq T$. For example, $G=\Z/4$ has $3$ subgroups
totally ordered by inclusion, so
this poset consists of all non-empty subsets of the set of subgroups, and has $7$ elements.

Let $X$ be a $G$-equivariant spectrum. In this note, we only consider $G$-equivariant
spectra  indexed over a complete universe - see \cite{lms}.

\begin{definition}
\label{e1a}
Define a functor
$$\Gamma=\Gamma_{G,X}:P(G)\r \text{$G$-spectra}$$
as follows: Denoting the $S$ from \rref{e1} as $S_k$ (to capture the number of elements), define inductively:
\beg{e1b1}{\Gamma(S_1)=F(E(G/H_{1})_+,E\widetilde{\mf[H_1]}\wedge X),}
\beg{e1b}{\begin{array}{l}
\Gamma(S_k)=
F(E(G/H_{k})_+,E\widetilde{\mf[H_k]}\wedge\Gamma(S_{k-1})).
\end{array}
}
Note that there is a canonical and natural morphism of $G$-spectra
\beg{e2}{Y\r F(E(G/H)_+,\widetilde{E\mf[H]}\wedge Y),
}
and the effect of $\Gamma$ on arrows is defined by iterating these maps. 
\end{definition}

By iterating 
\rref{e2}, there is also a canonical natural transformation
\beg{e2a}{\gamma_X:Const_X\r \Gamma
}
where $Const_X$ is the constant functor on $P(G)$ with value $X$. Note that in the case of $G=\Z/p$
for $p$ prime, this definition reduces to the ``Tate square" of Greenlees and May \cite{gmt}.

In the next section, we shall calculate the effect of the functor $\Gamma$ on coefficients
explicitly in the case $X=MU_G$.
This is relatively routine, although the statement is technical. Our main result is the
following:

\begin{theorem}
\label{t1}
For $X=MU_G$,
applying the coefficient (homotopy groups) functor to the map $\gamma_{MU}$ of \rref{e2a} induces an isomorphism
\beg{et1}{\diagram (MU_{G})_*\rto^(.4)\cong & \displaystyle\lim_\leftarrow \Gamma(S)_*.
\enddiagram}
\end{theorem}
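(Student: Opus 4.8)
The plan is to deduce \rref{et1} from a sharper, purely spectrum-level statement: that $\gamma_X$ is a weak equivalence $X \xrightarrow{\ \sim\ } \operatornamewithlimits{holim}_{P(G)}\Gamma_{G,X}$ for an \emph{arbitrary} $G$-spectrum $X$ on a complete universe, with the hypothesis $X = MU_G$ intervening only at the final passage from homotopy limits of spectra to inverse limits of homotopy groups. The base case is the remark following \rref{e2a}: for $G = \Z/p$ the poset $P(\Z/p)$ is the cospan $\{e\} \to \{e \subsetneq G\} \leftarrow \{G\}$, on which $\Gamma$ takes the values $F(EG_+,X)$, $\widetilde{E\mf[G]}\wedge F(EG_+,X)$ and $\widetilde{E\mf[G]}\wedge X$; its homotopy limit is the Greenlees--May Tate pullback and $\gamma_X$ is the comparison map, an equivalence for every $X$.

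For the inductive step I would induct on $|G|$, fracturing $X$ along the family $\mf[G]$ of all proper subgroups via the cofiber sequence $E\mf[G]_+ \to S^0 \to \widetilde{E\mf[G]}$. This exhibits $X$ as the homotopy pullback of the geometric piece $\widetilde{E\mf[G]}\wedge X$ and the Borel piece $F(E\mf[G]_+,X)$ over the Tate piece $\widetilde{E\mf[G]}\wedge F(E\mf[G]_+,X)$. I would match this square with the splitting $P(G) = P_0 \sqcup P_1$, where $P_1$ is the up-set of chains whose top element is $G$ and $P_0$ the complementary down-set of chains of proper subgroups. The chain $\{G\}$ is initial in $P_1$, so $\operatornamewithlimits{holim}_{P_1}\Gamma \simeq \Gamma(\{G\}) = \widetilde{E\mf[G]}\wedge X$ recovers the geometric vertex on the nose. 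It then remains to identify $\operatornamewithlimits{holim}_{P_0}\Gamma$ with the Borel vertex $F(E\mf[G]_+,X)$ and the gluing term with the Tate vertex; this is exactly the place where the inductive hypothesis enters, applied to the restriction of $X$ to the (smaller) proper subgroups of $G$ and their associated subquotient families.

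With the equivalence $X \simeq \operatornamewithlimits{holim}_{P(G)}\Gamma$ in hand, I would obtain \rref{et1} through the homotopy-limit spectral sequence $\lim^{s}_{P(G)} \Gamma(S)_{t} \Rightarrow \pi_{t-s}\operatornamewithlimits{holim}_{P(G)}\Gamma$. Feeding in the explicit description of the groups $\Gamma(S)_*$ from the next section --- in particular their concentration in even degrees together with the structure of the transition maps in the diagram --- the higher derived limits $\lim^{s}$ for $s>0$ vanish, the spectral sequence collapses to its edge, and $\pi_*\operatornamewithlimits{holim}_{P(G)}\Gamma \cong \lim_{\leftarrow}\Gamma(S)_*$. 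Composing with the isomorphism $(\gamma_{MU})_* \colon (MU_G)_* \xrightarrow{\ \cong\ } \pi_*\operatornamewithlimits{holim}_{P(G)}\Gamma$ yields the theorem.

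I expect the principal obstacle to be the middle step: reconciling the combinatorics of the poset of chains with the iterated Tate squares, that is, proving that $\operatornamewithlimits{holim}_{P(G)}\Gamma$ genuinely assembles as the Tate pullback and that the inductive hypothesis can be transported to the proper-subgroup part while correctly tracking the complete universe and the two families $\mf(H)$ and $\mf[H]$ under restriction to subquotients. A technically cleaner alternative for the equivalence, which I would keep in reserve, is to verify $\gamma_X$ directly on geometric fixed points $\Phi^K$ for each subgroup $K \le G$: since $P(G)$ is \emph{finite}, $\Phi^K$ commutes with $\operatornamewithlimits{holim}_{P(G)}$, and the building blocks $\Phi^K(\widetilde{E\mf[H]}\wedge -)$ and $\Phi^K F(E(G/H)_+,-)$ each collapse --- according to whether $K \supseteq H$ or $K \subseteq H$ --- to either $\Phi^K(-)$ or a Tate-type term, reducing the equivalence to a finite, combinatorially controlled check. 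The even-degree concentration that kills $\lim^{s>0}$ is a secondary, computational matter deferred to the coefficient calculation.
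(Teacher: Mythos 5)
Your first half is essentially the paper's own Theorem \ref{t2}, proved the same way: induction on $|G|$, splitting $P(G)$ into the up-set of chains containing $G$ (where $\{G\}$ is initial, so the homotopy limit is $\widetilde{E\mf[G]}\wedge X$) and the complementary subposet $\check{P}(G)$, identifying the homotopy limit over $\check{P}(G)$ with $F(E\mf[G]_+,X)$ by the inductive hypothesis applied to proper subgroups together with transitivity of homotopy limits, and concluding via the generalized Tate square for the family $\mf[G]$. Your reserve option of checking $\gamma_X$ on geometric fixed points is also legitimate for this spectrum-level statement (finite posets give finite homotopy limits, which commute with $\Phi^K$), but it does nothing for the real difficulty, which lies entirely in your last step.

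The genuine gap is the claim that $\lim^s_{P(G)}\Gamma(S)_t$ vanishes for $s>0$ because the groups $\Gamma(S)_*$ are even. Evenness only kills $E_2^{s,t}=\lim^s\Gamma(S)_t$ for $t$ odd; the groups $\lim^1$ of the even part (contributing in odd total degree) and $\lim^s$ for even $s\geq 2$ (contributing in even total degree, hence threatening injectivity of the edge map), as well as differentials such as $d_3\colon E_3^{0,t}\to E_3^{3,t+2}$ between even rows (threatening surjectivity), are derived limits of even groups over a non-cofiltered poset and are not excluded by any parity argument. The paper does not prove such vanishing and its proof of Theorem \ref{t1} is structured precisely to avoid needing it: it introduces the class of generalized $MU_G$'s (closed under $\Phi^H$ and $F(EG_+,-)$) so that a second induction on $|G|$ can be run at the coefficient level, proves in Proposition \ref{p1} that the Greenlees--May completion theorem and Section 7 of \cite{Greenlees} persist for this class, decomposes $\Gamma$ into the subdiagrams $\mathcal{M}_H$ of \rref{e14}, and invokes the first sentence of the proof of Lemma 7.2 of \cite{Greenlees} to show that the vertical arrow of \rref{e15} induces an \emph{isomorphism} in filtration degrees $\geq 1$ of the relevant homotopy-limit spectral sequences; the derived terms therefore cancel in pairs rather than vanish, reducing everything to the non-derived limit of the ordinary Tate square \rref{etate}. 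The one remaining possible derived term, a $\lim^1$ of that pullback in odd total degree, is then killed not by evenness of the diagram but by the external fact that $(MU_G)_*$ itself has no odd-degree elements, by \cite{cm,l} --- an input your proposal never invokes. Without a substitute for this cancellation argument and the appeal to evenness of $(MU_G)_*$, your collapse claim, and hence your deduction of \rref{et1}, is unsupported.
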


Since we are dealing with an inverse limit, the validity
of the isomorphism in the category of abelian groups automatically implies its
validity in the category of commutative rings.

\vspace{3mm}
\begin{proposition}
\label{p0}
The limit on the right hand side of \rref{et1} can be calculated by restricting to the 
subset $P^{\prime}(G)$ of $P(G)$ consisting of sets $S$ of cardinality $\leq 2$,
or even further to the subset $P^{\prime\prime}(G)$
consisting of sets $S$ which have either cardinality $1$ or consist of two groups
$$H_1\subsetneq H_2$$
for which there does not exist a group $K$ which would satisfy
\beg{ep0}{H_1\subsetneq K\subsetneq H_2.}
\end{proposition}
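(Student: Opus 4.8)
The plan is to shrink the index poset in two stages, $P(G)\supseteq P'(G)\supseteq P''(G)$, where the first step is purely categorical and the second carries all the homotopy-theoretic content. Write $\Phi_H(-)=F(E(G/H)_+,\widetilde{E\mf[H]}\wedge -)$, so that $\Gamma(\{H_1\subsetneq\cdots\subsetneq H_k\})=\Phi_{H_k}\cdots\Phi_{H_1}(X)$ and every structure map of $\Gamma$ is obtained by iterating the unit $\eta_H\colon Y\to\Phi_H(Y)$ of \rref{e2}. Since we are computing an inverse limit of graded abelian groups, it suffices to treat the functor $S\mapsto\Gamma(S)_*$ and to show that both restriction maps $\lim_{P(G)}\Gamma(S)_*\to\lim_{P'(G)}\Gamma(S)_*\to\lim_{P''(G)}\Gamma(S)_*$ are isomorphisms.

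For the first reduction I would invoke the standard fact that an inverse limit is unchanged along an \emph{initial} subcategory, i.e. one for which each comma category $(\iota\downarrow c)$ is nonempty and connected. For $\iota\colon P'(G)\hookrightarrow P(G)$ and a chain $c=\{H_1\subsetneq\cdots\subsetneq H_k\}$, the comma category $(\iota\downarrow c)$ is exactly the poset of sub-chains of $c$ of cardinality $\le 2$. This is visibly connected: any two singletons $\{H_i\}$, $\{H_j\}$ are joined through the pair $\{H_i,H_j\}$, itself a chain of length $\le 2$ inside $c$. Hence $\iota$ is initial and the first restriction is an isomorphism with no input from the particular functor $\Gamma$.

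The second reduction is where the argument has teeth, because $P''(G)\hookrightarrow P'(G)$ is \emph{not} initial: for a non-minimal pair $\{H_1\subsetneq H_2\}$ with intermediate $K$ the comma category is the discrete two-point poset $\{\{H_1\},\{H_2\}\}$. Instead I would argue directly. The non-minimal pairs are maximal in $P'(G)$, so restriction to $P''(G)$ is automatically injective (each omitted value $x_{\{H_1,H_2\}}$ is forced to be the image of $x_{H_1}$). For surjectivity one must check, for a compatible family on $P''(G)$, that the remaining conditions
\[
\phi_{H_1,H_2}(x_{H_1})=\psi_{H_1,H_2}(x_{H_2})\quad\text{in }\Gamma(\{H_1,H_2\})_*
\]
hold at every non-minimal pair, where $\phi,\psi$ are the two structure maps from the singletons. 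I would prove this by induction on the length of the interval $[H_1,H_2]$ in the subgroup lattice. Choosing an intermediate $K$, the inductive hypothesis supplies the conditions at $\{H_1,K\}$ and $\{K,H_2\}$; pushing these forward into $\Gamma(\{H_1,K,H_2\})_*$ and using that any two composites $\Gamma(\{H\})_*\to\Gamma(\{H_1,K,H_2\})_*$ agree (the poset is thin), one finds that the images of $x_{H_1}$ and $x_{H_2}$ in $\Gamma(\{H_1,K,H_2\})_*$ coincide. The desired equality in $\Gamma(\{H_1,H_2\})_*$ then follows as soon as the ``insert-$K$'' map $j\colon\Gamma(\{H_1,H_2\})_*\to\Gamma(\{H_1,K,H_2\})_*$ is injective.

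This injectivity is the main obstacle. The map $j$ is $\pi_*$ of $\Phi_{H_2}(\eta_K)$ applied to $W=\Phi_{H_1}(X)$, whose fiber is $\Phi_{H_2}$ of the fiber of $\eta_K\colon W\to\Phi_K(W)$. Part of this fiber, the piece $E\mf[K]_+\wedge W$, is annihilated because $K\subsetneq H_2$ forces every geometric fixed point of $\widetilde{E\mf[H_2]}\wedge E\mf[K]_+$ to vanish; the surviving part is the $\mf(K)$-completion fiber built from $\widetilde{E\mf(K)}$, and here the delicate iterated geometric-fixed-point and Tate behaviour enters. I would reduce injectivity of $j$ to the vanishing of a single connecting homomorphism in the resulting long exact sequence, and expect to obtain that vanishing from the explicit description of the groups $\Gamma(S)_*$ for $X=MU_G$ computed in the next section: once these are known to be concentrated in even degrees, the long exact sequences degenerate and the comparison maps $j$ are forced to be monomorphisms. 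Thus the whole difficulty of the proposition concentrates in controlling these iterated Tate constructions for $MU_G$, the soft categorical reduction to $P'(G)$ being essentially free.
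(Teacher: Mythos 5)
Your two-stage reduction is exactly the paper's: the initiality argument for $P'(G)\hookrightarrow P(G)$ is the categorical form of the paper's first paragraph (the limit embeds into the product over the singletons $\{H\}$, and the constraint imposed by any chain $S$ with $\{H_1\},\{H_2\}\leq S$ is already imposed at $\{H_1,H_2\}\leq S$), and your second step --- observe that $P''(G)$ is not initial, adjoin $\{H_1,K,H_2\}$ to the diagram by functoriality, induct on the length of the interval $[H_1,H_2]$, and reduce everything to an injectivity statement about a structure map into $\Gamma(\{H_1,K,H_2\})_*$ --- is precisely the paper's second paragraph. Up to that point your proposal is correct and matches the paper.

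The genuine gap is in how you discharge the injectivity. First, you ask for more than is needed: full injectivity of $j\colon\Gamma(\{H_1,H_2\})_*\to\Gamma(\{H_1,K,H_2\})_*$. The paper claims and uses only that the \emph{composites} $\Gamma(\{H_1\})_*\to\Gamma(\{H_1,K\})_*$ and $\Gamma(\{H_1\})_*\to\Gamma(\{H_1,K,H_2\})_*$ are injective --- i.e., that the dotted arrow is injective on the image of the horizontal one --- and this is read off from the explicit algebraic description of $\Gamma(S)_*$ in Section \ref{s2} (Theorem \ref{tc}), where the structure maps are identified concretely as $u_L\mapsto u_L$ together with \rref{eui}, between explicitly presented localized/completed rings. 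Full injectivity of $j$ itself, a map between completions of large non-Noetherian rings, is nowhere asserted in the paper and would require a separate argument. Second, and more seriously, your proposed mechanism is a non sequitur: evenness of the homotopy of the source and target of a map of spectra does not force the induced map to be a monomorphism (a zero map between even spectra is a counterexample); what the long exact sequence actually requires is that the fiber of $j$ have homotopy vanishing in even degrees, and your fiber analysis stops exactly short of establishing this. The piece you correctly kill, $\widetilde{E\mf[H_2]}\wedge E\mf[K]_+\simeq *$, is fine, but the surviving piece built from $F(\widetilde{E(G/K)},\widetilde{E\mf[K]}\wedge W)$ is a local-(co)homology-type construction whose homotopy is not formally odd; controlling such terms is the hard content of Greenlees's Lemma 7.2, which the paper invokes only later, in the proof of Theorem \ref{t1}, and then only with the extra input of Proposition \ref{p1}. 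For Proposition \ref{p0} itself the paper deliberately bypasses spectrum-level fiber sequences and takes the injectivity from the ring-theoretic computation; your proof becomes correct if you do the same, formulating the injectivity for the maps out of the singletons rather than for $j$ itself.
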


\begin{proof}
Since below every element of $P(G)$ there is an element of the form $\{H\}$, $\displaystyle\lim_\leftarrow \Gamma(S)_*$
is a subgroup of the product of the groups $\Gamma(\{H\})_*$. Since whenever 
$\{H_i\}$, $i=1,2$, $H_1\neq H_2$, are less or equal to
an element $S$ of the poset, there is an inclusion between $H_1$ and
$H_2$, and they are therefore both less or equal than $\{H_1,H_2\}$, the statement about $P^\prime$ follows.

To prove the statement about $P^{\prime\prime}$, it suffices to show that for configurations of the type \rref{ep0},
$\{H_1,H_2\}$ can be omitted from the diagram, provided we have $\{H_1,K\}$, $\{K,H_2\}$. The key point is that if 
we have $\{H_1,K\}$, $\{K,H_2\}$, we can equivalently include also $\{H_1,K,H_2\}$, by functoriality (as in the 
first part of the argument). We will see in Section \rref{s2} below that the structure homomorphisms
$\Gamma(\{H_1\})\r \Gamma(\{H_1,K\})$ and $\Gamma(\{H_1\})\r \Gamma(\{H_1,K,H_2\})$ are injective. Now consider
a diagram in abelian groups, or sets of the form
$$\diagram
&&\cdot\\
\cdot \rto_\subseteq\urrto^\subseteq &\cdot\urdotted|>>\tip\\
&\cdot\uto\uurto &
\enddiagram$$
Then the pullback of the diagram comprised of the horizontal and vertical arrows is isomorphic, via
the canonical map, to the pullback of the diagram comprised 
of the diagonal arrows because the dotted arrow is injective on the image of the horizontal one. Letting the dotted
arrow be the structure map $\Gamma(\{H_1,H_2\})\r \Gamma(\{H_1,K,H_2\})$ gives the required result.
\end{proof}

\vspace{5mm}

\section{Computation of the functor $(\Gamma_{G,MU})_*$}
\label{s2}

\vspace{3mm}
This is essentially a gathering of known facts. Let, for any abelian
group $A$, $A^*=Hom(A,S^1)$ and $\overline{A}=A\smallsetminus \{0\}$.
Recall that by tom Dieck's
result \cite{td,k}, \cite{Greenlees}, Corollary 10.4, we have
\beg{e8}{\begin{array}{l}(E\widetilde{\mf[H]}\wedge MU)^{H}_{*}=\\
MU_*[u_{L}^{\pm 1},u_{L}^{(i)}|i>0, L\in \overline{H^{*}}].
\end{array}
}
For the purposes
of this note we don't really need to know what the classes $u_{L}^{(i)}$ are, 
(we set $u_{L}^{(0)}=u_L$), the only fact we need to know is that under the canonical
map of \rref{e8} into
\beg{e81}{(\begin{array}{l}\widetilde{E\mf[H]}\wedge F(EG_+,MU))^{H}_{*}=\\
MU_*[[u_L|L\in H^{*}]]/
(u_{L}+_F u_{M}=u_{LM}),
\end{array}}
we have
\beg{eui}{u_{L}^{(i)}\mapsto\parbox{3.5in}{The coefficient of $x^i$ in $x+_F u_L$}}
(see \cite{k}, Theorem 1.1).

\vspace{3mm}
\begin{definition}
\label{de9}
Recall the notation $S$ from equation \rref{e1}. Let
$MU_{S,0} = MU$, and define inductively
\beg{e9}{
MU_{S,j} = (\widetilde{E \mathcal{F}[H_j]} \wedge F(E(G/H_{j-1})_+, MU_{S,j-1})_*^{H_j}.
}
for $j=1,\dots,k$.
\end{definition}

\vspace{3mm}

Now consider $H_1=H$ in \rref{e8}, \rref{e81}.
Assume inductively that we have calculated the coefficients of the $H_{j-1}$-spectrum $MU_{S,j-1}$.
The $H_j/H_{j-1}$-spectrum \rref{e9} is split only if $j=2$, but in either case
the Borel cohomology spectral sequence associated with
\beg{e11}{F(E(G/H_{j-1})_+,MU_{S,j-1})^{H_j}_{*}
}
collapses by evenness. Hence we know \rref{e11} has an associated graded
object isomorphic to
\beg{e12}{(MU_{S,j-1})^{H_{j-1}}_{*}B(H_j/H_{j-1}).
}
On the other hand, the coefficient ring \rref{e11} is generated as a $(MU_{S,j-1})_*$-algebra
by Euler classes of $1$-dimensional complex representations, so the precise
relations in the ring \rref{e11} are not difficult to compute from the formal group law.
Furthermore,
\beg{e13}{(\widetilde{E\mf[H_j]}\wedge F(E(G/H_{j-1})_+,MU_{S,j-1}))^{H_j}_{*}
}
is obtained from \rref{e11} by inverting the Euler classes $u_L$ of irreducible complex representations
$L$ of $H_j$ which are non-trivial on $H_j$.

\vspace{3mm}
Explicitly, let $R_j$, $j=0,...,k$ be a set of $G/H_{j}$-representatives of the irreducible
non-trivial complex $H_{j+1}/H_j$-representations (we set $H_0=\{e\}$, $H_{k+1}=G$).

\vspace{3mm}
\begin{definition}
\label{dering}
Let $A_{G,S}=A_S$ denote the ring
\beg{eering}{\begin{array}{l}
MU_*[u_L,u_{M}^{-1},u_{N}^{(i)}|i>0, \\
L\in R_0\amalg...\amalg R_{k}, M\in R_0\amalg ...\amalg R_{k-1}, N\in R_0].
\end{array}}
\end{definition}

\vspace{3mm}
On this ring, define the following topology $\mt_{G,S}=\mt_S$, similar to topologies which often
occur in completion theorems: A sequence of monomials
$$a_t\prod_{L\in R_1\amalg...\amalg R_k} u_{L}^{n(L,t)}\in A_S,\; t=1,2,,\dots
$$
with
$$0\neq a_t\in MU_*[u_{L}^{\pm1},u_{L}^{(i)}|i>0,L\in R_0]$$
converges to $0$ if and only if there exists a $j=1,...,k$ such that the
following two conditions are met:

\vspace{3mm}
\noindent
(A) $n(L,t)$ is eventually constant in $t$ for $L \in R_i$, $i >j$; and \newline
(B) $n(L,t) \rightarrow_t +\infty$ for $L \in R_j$.
for $L\in R_j$. 

\vspace{3mm}
A sequence of elements $p_t\in A_{S}$ converges to $0$ if and only if
choosing arbitrary non-zero monomial summands $m_t$ of $p_t$, the sequence of
monomials $m_t$ converges to $0$ in $t$. A set $T\subset A_S$ is closed if and only if
the limit of every sequence in $T$ convergent  in $A_S$ is in $T$.

\vspace{3mm}

\begin{theorem}
\label{tc}
$\Gamma(S)_*$ is the quotient of the completion
$$(A_S)^{\wedge}_{\mt_S}$$
by the (closed) ideal $I_S=I_{G,S}$ generated by the relations
\beg{etc1}{u_{L_1}+_F u_{L_2}=\left(\sum_{i=1}^{m}\right)_Fu_{M_i}}
whenever 
\beg{etc2}{L_1 L_2\cong\prod_{i=1}^{m} M_i}
and there exists a $j=1,...,k$ such that
$$L_1,L_2\in R_j,$$
$$M_i\in R_j\amalg...\amalg R_k.$$
\end{theorem}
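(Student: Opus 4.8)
The plan is to prove Theorem~\ref{tc} by induction on the cardinality $k$ of $S$, following the recursion of Definition~\ref{de9}. The first task is a reduction: I would identify $\Gamma(S)_*$ with the top of the tower $MU_{S,j}$ of~\rref{e9}, together with one final outer Borel construction $F(E(G/H_k)_+,-)$ for the quotient $H_{k+1}/H_k=G/H_k$. Each stage $j$ of the recursion consists of two operations applied to $MU_{S,j-1}$: the function-spectrum construction $F(E(G/H_{j-1})_+,-)$, which on coefficients is a Borel completion, and then the smash with $\widetilde{E\mf[H_j]}$ followed by passage to $H_j$-fixed points, which inverts Euler classes. This is exactly the generalized Tate-square pattern of \cite{gmt}, applied inductively as in \cite{k}; for $G=\Z/p$ it degenerates to the two rings~\rref{e8} and~\rref{e81}, with~\rref{eui} relating the inverted (geometric) description to the completed (Borel) one. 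I would set up the induction so that the inductive hypothesis records not just the underlying ring $MU_{S,j-1}$ but also its topology and the partition of its Euler-class generators into those that are inverted, those carrying the higher classes $u^{(i)}$, and those that are merely completed; the sets $R_0,\dots,R_k$ are precisely the device that tracks this partition through the filtration.

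For a single inductive step I would first compute the ring~\rref{e11}. By the evenness collapse already invoked in the text, the associated Borel spectral sequence degenerates, so~\rref{e11} has associated graded~\rref{e12}, namely $(MU_{S,j-1})^{H_{j-1}}_*B(H_j/H_{j-1})$. The new Euler classes appearing here are those of the irreducible characters of $H_j/H_{j-1}$, i.e.\ of the set $R_{j-1}$. Because~\rref{e11} is generated over $(MU_{S,j-1})_*$ by Euler classes of one-dimensional representations, and these satisfy the formal group law relation $u_L+_F u_M=u_{LM}$ transported from~\rref{e81}, the multiplicative extensions left open by the spectral sequence are forced by the formal group law, yielding a presentation of~\rref{e11} as a completed polynomial algebra. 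Inverting the Euler classes of the characters nontrivial on $H_j$ then produces~\rref{e13}, which is $MU_{S,j}$; since any character nontrivial on $H_{j-1}$ is \emph{a fortiori} nontrivial on $H_j$, this inversion reinverts all previously introduced classes and inverts the new $R_{j-1}$-classes as well. The bookkeeping to keep straight is that the ground level $R_0$ acquires both an inverse and the higher classes $u^{(i)}$ (from tom Dieck's ring~\rref{e8} via~\rref{eui}), the intermediate levels $R_1,\dots,R_{k-1}$ are inverted without higher classes, and the top level $R_k$---introduced by the final outer Borel construction, after which no further smash with $\widetilde{E\mf}$ occurs---is only completed and is neither inverted nor equipped with $u^{(i)}$.

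Next I would assemble the topology and the relations. Iterating the Borel completions across the filtration produces a nested power-series topology in which the exponent of a convergent-to-zero monomial may grow on some level $R_j$ only once the exponents on all higher levels $R_i$, $i>j$, have stabilized; matching this against the definition of $\mt_S$ gives exactly conditions~(A) and~(B), with $R_0$ absorbed into the coefficient ring $MU_*[u_L^{\pm1},u_L^{(i)}\mid L\in R_0]$ because it is fully inverted and uncompleted. The defining relations~\rref{etc1} arise by specializing the single formal group law relation of~\rref{e81} to each level: whenever a product $L_1L_2$ of representatives in $R_j$ factors as a product of representatives in $R_j\amalg\dots\amalg R_k$ as in~\rref{etc2}, the corresponding formal-sum relation survives, and it must be imposed as a closed ideal because we are working in the completed ring. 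Collecting the contributions of all stages $j=1,\dots,k$ then identifies $\Gamma(S)_*$ with $(A_S)^\wedge_{\mt_S}/I_S$.

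The step I expect to be the main obstacle is the precise matching of the two topologies, i.e.\ proving that the inverse limit computing the coefficients of the iterated function spectra agrees, as a \emph{topological} ring, with the completion of $A_S$ with respect to $\mt_S$, including the asymmetric roles of the levels encoded in~(A) and~(B). Closely related is the need to verify that localization and completion may be interchanged in the required order as one passes from stage $j$ to stage $j+1$: inverting the Euler classes at one stage and then completing at the next must assemble without introducing hidden $\lim^1$ terms or unexpected extensions, and it is exactly here that the generalized Tate-square formalism must be handled with care. Once these compatibilities are established, the identification of generators and relations is the routine---if technical---bookkeeping that the text anticipates.
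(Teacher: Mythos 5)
Your outline follows the same strategy as the paper: the paper's proof is precisely ``an induction\dots using the method described in the beginning of this section,'' i.e.\ the evenness collapse of the Borel cohomology spectral sequence for \rref{e11}, the identification of its associated graded with \rref{e12}, the resolution of multiplicative extensions via the formal group law, and the localization step \rref{e13} grounded in tom Dieck's computation \rref{e8}. Your bookkeeping of the levels $R_j$ --- $R_0$ inverted and carrying the $u^{(i)}$, the intermediate $R_1,\dots,R_{k-1}$ inverted only, $R_k$ completed only --- is also correct, and matches Definition \ref{dering}.

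However, the step you defer as ``the main obstacle'' --- matching the topology of the iterated construction with $\mt_S$ and ruling out hidden $\lim^1$ and localization/completion interchange problems --- is not a compatibility to be checked at the end; it is the actual content of the proof, and the paper has a concrete device for it that your plan lacks. Instead of comparing topological rings bottom-up along the whole tower, the paper inducts on $|G|$ as well as $k$ and peels off the \emph{top} of the chain, in two cases. When $H_k=G$, the outer Borel construction is trivial (since $E(G/G)=*$), and $\Gamma(S)_*$ is obtained from the case of $S\smallsetminus\{G\}$ by pure localization, inverting $u_L$ for $L\in R_k$, using \rref{e8}; no topological matching is needed. When $H_k\neq G$, one filters the \emph{algebraic} candidate $(A_{G,S})^{\wedge}_{\mt_{G,S}}/I_{G,S}$ by powers of the ideal $(u_L\mid L\in R_k)$; conditions (A) and (B) defining $\mt_S$ are engineered exactly so that the ring is complete for this filtration with associated graded $((A_{H_k,S})^{\wedge}_{\mt_{H_k,S}}/I_{H_k,S})[[u_L\mid L\in R_k]]/(u_L+_F u_M=u_{LM})$, which by the induction hypothesis applied to the \emph{smaller group} $H_k$ is \rref{e12}; and this filtration coincides with the Borel spectral sequence filtration, so the evenness collapse yields the identification with no separate interchange analysis --- each induction step applies completion only once, at the outermost level, to a ring already computed for the subgroup. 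Note also that your induction on $k$ alone does not carry the right hypothesis: the intermediate spectra $MU_{S,j}$ of \rref{e9} are $H_j$-spectra, so each stage really invokes the theorem for the smaller group $H_j$ and the truncated chain --- i.e.\ the paper's double induction on $(|G|,k)$. Restructured this way, and with the filtration comparison supplied, your outline becomes the paper's proof; as written, it stops short at exactly the point where the work happens.
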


Note: the relation \rref{etc1} of course holds for any $1$-dimensional complex representations $L_1,L_2$
which satisfy \rref{etc2}, but recall that we have restricted attention to a specific set of generators in \rref{eering}.

\Proof
An induction on $|G|$ and $k$, using the method described in the beginning of
this section. For $|G|=1$ or $k=1$ the statement is obvious. For a given $k>1$,
first assume $H_k\neq G$. Then, motivated by the Borel cohomology spectral sequence 
(\cite{gmt}, Theorem 10.5), filter the ring
$$(A_{G,S})^{\wedge}_{\mt_{G,S}}/I_{G,S}$$
by powers of the ideal 
$$(u_L|L\in R_k).$$
By definition, the associated graded ring is
$$((A_{H_k,S})^{\wedge}_{\mt_{H_k,S}}/I_{H_k,S})[[u_L|L\in R_k]]/(u_L+_F u_M=u_{LM})$$
(with the understanding, of course, that $u_0=0$) which, by the induction hypothesis,
coincides with \rref{e12}. The filtration also coincides with the Borel cohomology spectral sequence,
so the statements follows from that spectral sequence. The Borel cohomology spectral
sequence for complex cobordism in the abelian case is quite standard, see e.g. \cite{cm}.

When $H_k=G$, we have, by definition,
$$(A_{G,S})^{\wedge}_{\mt_{G,S}}/I_{G,S}=
(A_{G.S\smallsetminus\{G\}})^{\wedge}_{\mt_{G,S\smallsetminus\{G\}}}/I_{G,S\smallsetminus\{G\}}
[u_{L}^{-1}|L\in R_k],$$
which is $\Gamma_G(S)_*$ by the induction hypothesis and \rref{e8}.
\qed

\vspace{3mm}

It remains to compute the effect of $\Gamma$ on arrows (i.e. inclusions of $S$), but this is given
simply by
$u_L\mapsto u_L$
(i.e by these classes being sent to classes of the same name)
and by \rref{eui}, where applicable. This is a consequence of naturality of Euler classes.
Of course, our description of $\Gamma(S)_*$ depended on choices of $G/H_j$-representatives
of irreducible complex $H_{j+1}/H_j$-representations, so we need to specify how the description
changes when we change representatives. For $j>1$, replacing $L$ by 
$$L^\prime=L\prod_{i=1}^{m}M_i$$
with $M_i\in R_{j+1}\amalg...\amalg R_{k}$, we may simply use the relation
$$u_{L^\prime} =u_L+_F u_{M_1} +_F... +_F u_{M_m}.$$
For $j=1$, we use the relation
$$(u_{L^\prime}+_F x)=u_{L} +_F (u_{M_1}+_F...+_F u_{M_m}+_F x)$$
and compare the coefficients at $x^i$, where the contents of the parenthesis on the right
hand side are expanded as a series in $x$.

\vspace{5mm}
\section{An example: $G = \mathbb{Z}/p^n$}

Let us illustrate the result of these computations on 
the example $G = \mathbb{Z}/p^n$ for a prime $p$. As usual, the notation $[k]_F x$ denotes the 
$k$-fold sum $x +_F x +_F \cdots +_F x$ of $x$ under the formal group law $F$ (in our case, the non-equivariant
universal formal group law). Let $u_{[k]}$ denote $[p^k]_Fu$, and 
\[R_k = MU_*[u_j,u_j^{-1},b_j^{(i)} | i > 0, j \in \{1,2,\ldots,p^k-1\}][[u_{[k]}]]/([p^{n-k}]_Fu_{[k]}),\]
$$\begin{array}{l}S_k = MU_*[u_j,u_j^{-1},b_j^{(i)} | i > 0, j \in \{1,2,\ldots,p^k-1\}][[u_{[k]}]]/([p^{n-k}]_Fu_{[k]})[u_{[k]}^{-1}],
\end{array}$$
\[R^n = MU_*[u_j,u_j^{-1},b_j^{(i)} | i >0, j \in \{1,2,\ldots,p^n-1\}]. \]
Then Theorem \ref{t1} and Proposition \ref{p0} say that
$(MU_{\mathbb{Z}/p^n})_*$ is the $n$-fold pullback of the diagram of rings
\begin{equation} \label{zpnringcompeq} \xymatrix{& & & & R^n \ar[d]^{\phi^{n-1}} \\
& & &  R_{n-1} \ar[r]^{\psi_{n-1}} \ar[d]^{\phi_{n-2}} & S_{n-1} \\
& & R_2 \ar[d]^{\phi_1} \ar[r]^\cdots & S_{n-2} & & \\
& R_1 \ar[d]^{\phi_0} \ar[r]^{\psi_1} & S_1 & & \\
R_0 \ar[r]^{\psi_0} & S_0. & & &\\
\\}
\end{equation}
The maps $\psi_k$ are localization by inverting $u_{[k]}$, and the maps $\phi_k$ are determined by the properties of sending $u_{[k+1]}$ to $[p]_F u_{[k]}$ and $b_j^{(i)}u_j$ to the coefficient of $x^i$ in $x +_F [j]_F u_{[k]}$. $\phi^{n-1}$ is determined by the property of sending $b_j^{(i)}u_j$ to the coefficient of $x^i$ in $x +_F [j]_F u_{[k]}$.

\vspace{5mm}
\section{Proof of the main theorem}

First note that the natural transformation \rref{e2a} gives a canonical
morphism of $G$-spectra
\beg{e3}{\eta_X:X\r \operatornamewithlimits{holim}_\leftarrow \Gamma .
}
We first prove

\vspace{3mm}

\begin{theorem}
\label{t2}
The morphism $\eta_X$ is an equivalence of $G$-spectra for any $G$-spectrum $X$.
\end{theorem}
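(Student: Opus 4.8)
The plan is to induct on $|G|$, establishing the statement for all $G$-spectra $X$ at once so that the inductive hypothesis is available for every proper subgroup. The base case $|G|=1$ is immediate, since then $P(G)=\{\{e\}\}$, $\Gamma(\{e\})\simeq X$, and $\eta_X$ is the identity. For the inductive step the organizing device is isotropy separation at the top group $G$. Writing $\mf[G]$ for the family of proper subgroups, the cofiber sequence $E\mf[G]_+\r S^0\r\widetilde{E\mf[G]}$ yields the generalized Tate (isotropy separation) square, which exhibits $X$ as the homotopy pullback of
\[
F(E\mf[G]_+,X)\r \widetilde{E\mf[G]}\wedge F(E\mf[G]_+,X)\longleftarrow \widetilde{E\mf[G]}\wedge X .
\]
That this square is homotopy cartesian is standard: its two horizontal cofibers are each $E\mf[G]_+\wedge X$, and the comparison between them is $E\mf[G]_+$ smashed with the completion map $X\r F(E\mf[G]_+,X)$, which is an equivalence. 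The whole proof then reduces to reorganizing the homotopy limit $\operatornamewithlimits{holim}_{P(G)}\Gamma$ (the target of $\eta_X$ in \rref{e3}) into precisely this square, compatibly with $\eta_X$.

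First I would decompose the indexing poset. Let $P_0\subset P(G)$ be the down-set of chains of proper subgroups and $P_1$ the complementary up-set of chains containing $G$. The elementary but decisive observation is that $\{G\}$ is the minimum of $P_1$, hence homotopy initial there, so $\operatornamewithlimits{holim}_{P_1}\Gamma\simeq\Gamma(\{G\})=\widetilde{E\mf[G]}\wedge X$. Since $\Gamma(S'\cup\{G\})=\widetilde{E\mf[G]}\wedge\Gamma(S')$ for $S'\in P_0$, and since $S'\cup\{G\}$ is the least element of $P_1$ lying above $S'$, the standard presentation of a homotopy limit over a poset layered over $\{0<1\}$ writes $\operatornamewithlimits{holim}_{P(G)}\Gamma$ as the homotopy pullback of
\[
\operatornamewithlimits{holim}_{P_0}\Gamma\r \widetilde{E\mf[G]}\wedge\operatornamewithlimits{holim}_{P_0}\Gamma\longleftarrow \widetilde{E\mf[G]}\wedge X .
\]
Here I use that $P(G)$ is finite and that smashing with the fixed object $\widetilde{E\mf[G]}$ is exact, hence commutes with this finite homotopy limit, which is what lets the matching term be pulled out as $\widetilde{E\mf[G]}\wedge\operatornamewithlimits{holim}_{P_0}\Gamma$.

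The remaining and essential point is the identification $\operatornamewithlimits{holim}_{P_0}\Gamma\simeq F(E\mf[G]_+,X)$, which makes the displayed pullback coincide with the isotropy separation square and identifies $\eta_X$ with its cartesian comparison. This is where the induction is spent. The poset $P_0$ is the nerve of the poset of proper subgroups, and $\Gamma$ on it is assembled, stage by stage along each chain, from the Borel completions $F(E(G/H_j)_+,-)$ and the geometric twists $\widetilde{E\mf[H_j]}\wedge(-)$ attached to proper subgroups $H_j$. Stratifying $P_0$ by the top subgroup of a chain and invoking the inductive hypothesis at the proper subgroups, together with the naturality of these family constructions under restriction, assembles the local pieces into the $\mf[G]$-Borel completion $F(E\mf[G]_+,X)$; this is the relative form of the theorem, for the family $\mf[G]$ in place of all subgroups. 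For $G=\mathbb{Z}/p^{n}$, where $\mf[G]=\mf(H)$ for the unique maximal subgroup $H$, this identification is literally the inductive statement for $H$, which is exactly why the answer there is the $n$-fold pullback of \rref{zpnringcompeq}.

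The main obstacle I anticipate is precisely this last identification. The geometric twists $\widetilde{E\mf[H_j]}\wedge(-)$ encode the overlaps among the subgroups, so proving that the twisted chain diagram over $P_0$ computes the genuine completion $F(E\mf[G]_+,X)$, rather than a naive approximation to it, requires careful bookkeeping of the twists and an honest comparison between the chain poset and the skeletal (orbit-category) model of $E\mf[G]$. By contrast, the reorganization in the first two steps is essentially formal, since finiteness of $P(G)$ makes smashing with $\widetilde{E\mf[G]}$ commute with the limit. As a consistency guide, though not a proof since $\Phi^K$ does not commute with homotopy limits, one may check geometric fixed points: $\Phi^K(\widetilde{E\mf[H]}\wedge Y)\simeq\Phi^K(Y)$ when $H\le K$ and is contractible otherwise, which is what selects, for each $K$, the chains contributing to $\Phi^K$ of the reconstruction.
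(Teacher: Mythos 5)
Your global strategy coincides with the paper's: induct on $|G|$; split $P(G)$ into the down-set $P_0=\check{P}(G)$ of chains of proper subgroups and the up-set of chains containing $G$; use $E(G/G)=*$ and the initiality of $\{G\}$ to rewrite $\operatornamewithlimits{holim}_{P(G)}\Gamma$ as the homotopy pullback of $\operatornamewithlimits{holim}_{P_0}\Gamma\r\widetilde{E\mf[G]}\wedge\operatornamewithlimits{holim}_{P_0}\Gamma\leftarrow\widetilde{E\mf[G]}\wedge X$ (the paper's diagram $\md$, with the same implicit commutation of $\widetilde{E\mf[G]}\wedge(-)$ past the finite homotopy limit); and conclude by comparing with the generalized Tate square for the family $\mf[G]$, whose homotopy-cartesianness you verify by the same fiber/cofiber argument the paper uses. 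But the one step that carries all the content of the induction --- the identification $\operatornamewithlimits{holim}_{P_0}\Gamma\simeq F(E\mf[G]_+,X)$ --- you only gesture at and explicitly defer (``the main obstacle I anticipate''). As it stands this is a genuine gap: without it the displayed pullback has not been identified with the isotropy separation square, and the inductive step never closes.

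Moreover, the sketch you offer for that step points in a slightly wrong direction. Stratifying $P_0$ by the top subgroup of a chain is a monotone map $\max\colon P_0\r Q$ to the poset $Q$ of proper subgroups, but the right Kan extension along $\max$ evaluates at $H$ to the homotopy limit over the chains whose top \emph{contains} $H$, not over $P(H)$, so the inductive hypothesis cannot be fed into those strata; nor is any comparison with a skeletal or orbit-category model of $E\mf[G]$ needed. The paper's mechanism is instead the overlapping cover of $\check{P}(G)$ by the down-closed subposets $P(H)$, $H\in Q$, in which a chain occurs for \emph{every} proper $H$ containing its top, not only for the top itself: transitivity of homotopy limits gives $\operatornamewithlimits{holim}_{H\in Q}\operatornamewithlimits{holim}_{P(H)}\Gamma\simeq\operatornamewithlimits{holim}_{\check{P}(G)}\Gamma$ (valid for any functor); the induction hypothesis, applied in its relative $G$-equivariant form $F(E(G/H)_+,X)\simeq\operatornamewithlimits{holim}\Gamma|_{P(H)}$ --- note it is this statement, not the theorem for the group $H$ verbatim, that gets used --- identifies each piece; and since $E\mf[G]$ is by definition the (homotopy) colimit over $Q$ of the spaces $E(G/H)$, the $Q$-indexed homotopy limit of the $F(E(G/H)_+,X)$ is $F(E\mf[G]_+,X)$. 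With this covering argument supplied, the rest of your outline matches the paper's proof step for step.
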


\Proof
As already mentioned, the theorem is a generalization of the ``Tate square", and the proof proceeds accordingly.
We use induction on $|G|$. The statement is clearly true for $|G|=1$, so assume
it is true with $G$ replaced by $G^\prime$, $|G^\prime|<|G|$.
Denote by $\check{P}(G)$ the partially ordered subset consisting of all sets $S\in P(G)$ such
that
$$G\notin S.$$

\vspace{3mm}
\begin{definition}
\label{ddd}
Denote by $\md$ the diagram
\beg{edd}{\diagram
& \widetilde{E\mf[G]}\wedge X\dto\\
\holim \Gamma|_{\check{P}(G)}\rto & \widetilde{E\mf[G]}\wedge\holim \Gamma|_{\check{P}(G)}.
\enddiagram
}
\end{definition}

\vspace{3mm}
Then transitivity of homotopy limits gives an equivalence
\beg{e4}{\holim \Gamma\r\holim \md.
}
Note that $E(G/G)=*$; in the diagram $\mathcal{D}$, the top term corresponds to $S=\{G\}$, the lower left term to
$G\notin S$ and the lower right term to $G\in S, |S|>1$.
Now for a subgroup $H\subsetneq G$, we have a canonical inclusion $P(H)\subseteq \check{P}(G)$.
If we consider 
$$\holim \Gamma|_{P(H)}$$
as a contravariant functor on the poset $Q$ of subgroups $H\subsetneq G$ with respect to inclusion,
we have a canonical equivalence
\beg{e5}{\diagram\protect\holim (\holim \Gamma{|}_{P(H)})\rto^(.6){\sim} & \protect\holim \Gamma{|}_{\check{P}(G)}
\enddiagram
}
where the outside homotopy limit on the left hand side of \rref{e5} is taken over
$Q$. This is true with $\Gamma$ replaced by any functor. By the induction hypothesis, however,
the canonical morphism
$$F(E(G/H)_+,X)\r\holim \Gamma_{P(H)}$$
is an equivalence for $H\subsetneq G$, so \rref{e5} yields a canonical equivalence
\beg{e6}{\diagram
F(E\mf[G]_+,X)=\holim F(E(G/?)_+,X)|_Q
\rto^(.7)\sim & \holim \Gamma|_{\check{P}(G)}.
\enddiagram
}
The first equality is by definition of $E\mf[G]$.
Therefore, if we denote by $\mathcal{E}$ the diagram
\beg{e7a}{\diagram
& \widetilde{E\mf[G]}\wedge X\dto\\
F(E\mf[G]_+,X)\rto & \widetilde{E\mf[G]}\wedge F(E\mf[G]_+,X),
\enddiagram
}
the canonical map
\beg{e7}{\holim \me\r\holim \md
}
is an equivalence, which further obviously commutes with the canonical morphisms from $X$.

Note, on the other hand, however, that the canonical morphism from $X$ to $\holim \mathcal{E}$
is an equivalence, since $\mathcal{E}$ is the generalized ``Tate square'' for the family $\mf[G]$.
In other words, the fiber of the canonical morphism 
$$X\r \widetilde{E\mf[G]}$$
maps to the fiber of the bottom row of $\mathcal{E}$ by the canonical equivalence
$$E\mf[G]_+\wedge X\r E\mf[G]_+\wedge F(E\mf[G]_+,X),$$
which is an equivalence.
\qed

\vspace{3mm}
To prove the ``non-derived'' statement \rref{et1} for $X=MU_G$, we will use induction,
which will have to involve a somewhat more general class of spectra. Concretely, by
{\em generalized $MU_G$} we mean the smallest class of $G$-equivariant spectra for all $G$ finite abelian
which satisfies the following:
\begin{enumerate}
\item
$MU_G$ is a generalized $MU_G$ for all $G$ finite abelian.

\item
If $R$ is a generalized $MU_G$, and $H\subsetneq G$, then
$$\Phi^H R$$
are generalized $MU_{G/H}$ where $\Phi^H(?)=(\widetilde{E\mf[H]}\wedge?)^H$ is
the ``geometric fixed point functor'' (see \cite{lms}, Definition 9.7).

\item 
If $R$ is a generalized $MU_G$, then
$$F(EG_+,R)$$
is a generalized $MU_G$.

\end{enumerate}

\vspace{3mm}
\begin{proposition}
\label{p1}
The completion theorem \cite{gm}, and the statements of Section 7 of \cite{Greenlees}
remain valid with $MU_G$ replaced by any generalized $MU_G$.
\end{proposition}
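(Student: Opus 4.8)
The plan is to argue by structural induction on the inductive definition of generalized $MU_G$, isolating the formal properties of $MU_G$ that the proofs in \cite{gm} and in Section 7 of \cite{Greenlees} actually use. Inspecting those proofs, one sees that the only inputs are: that the spectrum is a module over the equivariant ring spectrum $MU_G$ and hence carries Euler classes $u_L$ for every $1$-dimensional complex representation $L$; that its relevant homotopy is concentrated in even degrees, so that the Borel cohomology spectral sequence collapses exactly as in \rref{e11}--\rref{e12}; and that the Euler classes of nontrivial complex representations generate the augmentation ideal up to the completion topology. The claim will follow once we verify that each of these three properties is inherited under the two generating operations $\Phi^H$ and $F(EG_+,-)$.

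For the base case, the statements for $MU_G$ itself are precisely the content of \cite{gm} and of Section 7 of \cite{Greenlees}. For the Borel construction I would use idempotence: the projection $EG\times EG\r EG$ is a $G$-equivalence, so $F(EG_+,F(EG_+,R))\simeq F(EG_+,R)$, and hence $F(EG_+,R)$ is already Borel-complete. The module structure over $F(EG_+,MU_G)$ supplies Euler classes, and the evenness and collapse of the associated Borel spectral sequence are exactly the facts recorded in \rref{e11}--\rref{e12}; thus the three inputs survive, and the completion statement for $F(EG_+,R)$ reduces to the observation that its completion map is already an equivalence.

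For the geometric fixed point operation I would use that $\Phi^H$ is symmetric monoidal and carries $MU_G$-modules to $\Phi^H MU_G$-modules, sending the Euler class $u_L$ of a $G$-representation $L$ nontrivial on $H$ to the Euler class of the corresponding $G/H$-representation; this is the naturality of Euler classes already exploited at the end of Section \ref{s2}. Applying the inductive hypothesis to $R$, together with the known behaviour of $\Phi^H$ on the families $\mf[K]$ and their classifying spaces, then identifies the augmentation ideal for $G/H$ with the ideal generated by these image Euler classes up to the completion topology, and preserves the evenness driving the collapse. Running the Greenlees--May argument verbatim with $G/H$ in place of $G$ then yields the statements for $\Phi^H R$.

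The main obstacle is this $\Phi^H$ step, and specifically the bookkeeping of completion ideals under the quotient $G\r G/H$: one must check that $\Phi^H$ carries the filtration by powers of the Euler-class ideal compatibly, so that the identification of the associated graded ring---the analogue of \rref{e12}---for $G/H$ descends from that for $G$, and that no $H$-isotropy information needed for the generation-up-to-topology statement is lost. This is exactly the interplay between $\Phi^H$, the families $\mf[K]$, and the classifying spaces $E(G/K)$ organized in Section \ref{s2}, so the verification, though technical, is a direct extension of the computations performed there.
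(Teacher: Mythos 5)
You correctly isolate the inputs that the Greenlees--May arguments actually need (evenness, and Euler classes generating the augmentation ideal up to the completion topology), and your base case is fine; but your induction founders exactly at the step you yourself flag, and that obstacle is not routine bookkeeping --- it is the entire content of the proposition. The functor $\Phi^H$ does not interact formally with $F(EG_+,-)$: the spectrum $\Phi^H F(EG_+,R')=(\widetilde{E\mf[H]}\wedge F(EG_+,R'))^H$ is a Tate-type construction whose coefficients cannot be extracted from ``$\Phi^H$ is monoidal and sends Euler classes to Euler classes'' together with an inductive hypothesis about $R'$; identifying them requires the Borel spectral sequence collapse and the Euler-class inversion of Section \ref{s2}, i.e.\ a genuine calculation, which your proposal defers rather than performs. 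Relatedly, the idempotence argument in your Borel step proves too little: $F(EG_+,F(EG_+,R))\simeq F(EG_+,R)$ does show Borel-completeness, and the completion theorem of \cite{gm} applies to any $MU_G$-module in any case; but the statements of Section 7 of \cite{Greenlees} (notably Lemma 7.2, which is what the proof of the main theorem actually invokes) are assertions about the coefficient ring, and require explicit knowledge of $R_*$ --- in particular that its augmentation ideal is generated by Euler classes --- which Borel-completeness alone does not provide.

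The paper's proof sidesteps the induction you attempt by splitting on the shape of the word in $\Phi^H$ and $F(EG_+,-)$ that produces $R$. If only geometric fixed point functors occur, iteration is unnecessary (successive $\Phi$'s compose into a single one), and by \cite{Greenlees}, Corollary 10.4, one obtains an $MU_G$-algebra $R$ with $R_*$ flat over $(MU_G)_*$, so the proofs of \cite{gm} and of Section 7 of \cite{Greenlees} apply verbatim --- a shortcut your proposal misses entirely. If $F(EG_+,-)$ is used at least once, the coefficients $R_*$ are known by a direct analogue of the computation of Section \ref{s2} --- precisely the computation your $\Phi^H$ step would need --- and one then sees explicitly that Euler classes of one-dimensional complex representations generate the augmentation ideal, after which the cited proofs again go through. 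To repair your argument you would either have to carry out that Section \ref{s2}-style computation inside the inductive step (at which point the induction buys nothing) or import the flatness observation to dispose of the pure geometric fixed point case.
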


\Proof
Note that generalized $MU_G$'s are formed by starting with $MU_\Gamma$ for some
$\Gamma$ finite abelian, and then successively applying
\beg{ep1}{\Phi^H,
}
or
\beg{ep2}{F(EG_+,?)}
for certain subquotients $H,G$ of $\Gamma$. If only functors of the form \rref{ep1} are applied
in the process, iteration is in fact unnecessary, and we obtain an $MU_G$-algebra $R$
where $R_*$ is flat over $(MU_G)_*$ by a result of Greenlees (\cite{Greenlees}, Corollary 10.4).
Therefore, the proofs of \cite{gm} and \cite{Greenlees}, Section 7 apply verbatim with $MU_G$
replaced by $R$.

If, on the other hand, $R$ is a generalized $MU_G$ in whose formation a functor of the form
\rref{ep2} is used at least once, then the coefficients $R_*$ are known by a direct analogue of the computation
of Section \ref{s2} above. In particular, one sees explicitly that Euler classes of $1$-dimensional complex representations
still generate the augmentation ideal of $R_*$, and the proofs \cite{gm}, \cite{Greenlees},
Section 7, still apply with $MU_G$ replaced by $R$.
\qed

\vspace{3mm}

\noindent
{\bf Proof of Theorem \ref{t1}:} We will prove that the statement of Theorem \ref{t1}
is valid with $MU_G$ replaced by any generalized $MU_G$, which we will denote by $R$.
Our proof is by induction on $|G|$. For $|G|=1$, the statement is obvious. For a given
$|G|$, and $\{e\}\neq H\subseteq G$, denote 
first by $\mathcal{D}_H$ the subdiagram of $\Gamma$ on all objects of the form
$$ ...F(E(G/H^{\prime})_+,\widetilde{E\mf[H^\prime]}\wedge R)$$
where $H^\prime\supseteq H$, and by $\widehat{\mathcal{D}_H}$ the subdiagram of $\Gamma$ on all objects
of the form
$$... \widetilde{E\mf[H^\prime]}\wedge
F(EG/\{e\}_+,\widetilde{E\mf[\{e\}]}\wedge R)$$
where $H^\prime\supseteq H$. Note that both diagrams $\mathcal{D}_H$ and $\widehat{\mathcal{D}_H}$
are indexed by the subposet of $P(G)$ on all sets \rref{e1} with $H_1\supseteq H$,
which is isomorphic to $P(G/H)$. Now in the same sense as in the proof of Theorem \ref{t2}.

\vspace{3mm}
\begin{definition}
Denote 
by $\mathcal{M}_H$ the subdiagram of
$\Gamma$ of the form
\beg{e14}{\diagram
&\mathcal{D}_H\dto\\
F(EG/\{e\}_+,\widetilde{E\mf[\{e\}]}\wedge R)\rto & \widehat{\mathcal{D}_H}.
\enddiagram
} 
\end{definition}

\vspace{3mm}
Taking homotopy limits of the corners of $\mathcal{M}_H$ for a given $H\neq \{e\}$, we obtain the diagram
\beg{e15}{\diagram
& \widetilde{E\mf[H]}\wedge R\dto\\
F(EG_+,MU)\rto & \widetilde{E\mf[H]}\wedge F(EG_+,R),
\enddiagram
}
by the induction hypothesis applied to $G/H$, and the fact that 
$$\widetilde{E\mf[\{e\}]}=S^0.$$
Taking the diagram consisting of the union of the diagrams \rref{e15} over $H\neq \{e\}$ where we put the
canonical arrows between the corresponding upper right and lower right corners 
induced by inclusions of the subgroups $H$, is then equivalent to the
homotopy limit of
the diagram formed by taking the union of the diagrams $\mathcal{M}_H$,
which is the diagram $\Gamma$. On the other hand, taking homotopy limits over $H\neq \{e\}$
in the upper and lower right corners of \rref{e15}, we obtain the ``ordinary'' Tate square for $R$
(as considered for example in \cite{Greenlees}):
\beg{etate}{\diagram
& \widetilde{EG}\wedge R\dto\\
F(EG_+,R)\rto & \widetilde{EG}\wedge F(EG_+,R).
\enddiagram
}
Now by the induction hypothesis, the coefficients of the upper right and lower right corners of
\rref{e15} are equal to the inverse limits of the coefficient functor applied to
the corresponding parts of the diagram
\rref{e14}. On the other hand, consider the spectral sequences corresponding to the homotopy limits
of the upper right and lower right corners \rref{e15} whose $E_2$-terms are right derived
functors of the limits of the diagrams. By the first sentence of the proof
of Lemma 7.2 of Greenlees \cite{Greenlees}, which remains valid with $MU$ replaced by $R$ by Proposition \ref{p1}
above, the vertical arrow of \rref{e15} induces an {\em isomorphism} in filtration degrees $\geq 1$
of the $E_2$-terms of those spectral sequences, and hence these terms may be ignored, and we see that the corners 
of the (ordinary) Tate diagram for $R$ are obtained as non-derived limits of the corresponding
parts of the diagram $\Gamma$. 

Finally, the homotopy limit of the Tate square can only have a derived term in filtration degree $1$, but such a term
would create odd degree elements in $(MU_G)_*$, which do not exist by \cite{cm,l}.
\qed

\vspace{10mm}

\end{document}